\theoremstyle{plain}
\newtheorem{theorem}{Theorem}[section]
\newtheorem{corollary}[theorem]{Corollary}
\newtheorem{lemma}[theorem]{Lemma}
\theoremstyle{definition}
\newtheorem{remark}[theorem]{Remark}
\newtheorem{example}[theorem]{Example}
\newtheorem{problem}[theorem]{Problem}
\DeclareMathOperator{\dist}{dist}
\newcommand{\R}{\mathbb{R}}
\newcommand{\N}{\mathbb{N}}
\newcommand{\D}{\mathbb{D}}
\DeclareMathOperator{\NA}{NA}
\renewcommand{\leq}{\leqslant}
\renewcommand{\geq}{\geqslant}
\begin{document}
\title[Proximinality of subspaces and lineability of norm-attaining functionals] {On proximinality of subspaces and the lineability of the set of norm-attaining functionals of Banach spaces}

\author{Miguel Mart\'{\i}n}

\address{Departamento de An\'{a}lisis Matem\'{a}tico \\ Facultad de
 Ciencias \\ Universidad de Granada \\ 18071 Granada, Spain
\newline
\href{http://orcid.org/0000-0003-4502-798X}{ORCID: \texttt{0000-0003-4502-798X} }
 }
\email{mmartins@ugr.es}

\thanks{Partially supported by Spanish MINECO/FEDER grant MTM2015-65020-P}

\begin{abstract}
We show that for every $1<n<\infty$, there exits a Banach space $X_n$ containing proximinal subspaces of codimension $n$ but no proximinal finite codimensional subspaces of higher codimension. Moreover, the set of norm-attaining functionals of $X_n$ contains $n$-dimensional subspaces, but no subspace of higher dimension. This gives a $n$-by-$n$ version of the solutions given by Read and Rmoutil to problems of Singer and Godefroy. We also study the existence of strongly proximinal subspaces of finite codimension, showing that for every $1<n<\infty$ and $1\leqslant k <n$, there is a Banach space $X_{n,k}$ containing proximinal subspaces of finite codimension up to $n$ but not higher, and containing strongly proximinal subspaces of finite codimension up to $k$ but not higher. Finally, we deal with possible infinite-dimensional versions of the previous results, showing that there are \emph{non-separable} Banach spaces whose set of norm-attaining functionals contains infinite-dimensional separable linear subspaces but no non-separable subspaces.
\end{abstract}

\date{January 31st, 2019}

\subjclass[2010]{Primary 46B04; Secondary 46B03, 46B20}

\keywords{Norm attaining functionals, renormings of Banach spaces, lineability, proximinal subspaces}

\dedicatory{To Elvira Su\'{a}rez Bellido, \emph{in memoriam}}

\maketitle

\section{Introduction}

In the early 1970's, Ivan Singer proposed the following problem \cite[Problem~1]{Singer-Roumanian}, \cite[Problem 2.1 in p.~14]{Singer}:
\begin{verse}
Let $1<n<\infty$. Does every infinite-dimensional normed linear space\\ (or, in  particular, Banach space) contain a proximal subspace of codimension $n$?
\end{verse}

Recall that a (closed) subspace $Y$ of a Banach space $X$ is said to be \emph{proximinal} if for every $x\in X$ the set
$$
P_Y(x):=\{y\in Y\colon \|x-y\|=\dist(x,Y)\}
$$
is non-empty. A hyperplane is proximinal if and only if it is the kernel of a norm-attaining functional, so the Hanh-Banach theorem assures the existence of many proximinal hyperplanes in every Banach space. Singer's question was answered recently in the negative by the late Charles J.\ Read \cite{Read}, who showed that there is an infinite-dimensional Banach space $\mathcal{R}$ containing no proximinal subspaces of finite codimension greater than or equal to two. More spaces of this kind have been constructed in \cite{KLMW-Jussieu}. The first aim of this paper is to give an $n$-by-$n$ version of Read's counterexample: we will show that for every $1<n<\infty$, there is a Banach space $X_n$ containing proximinal subspaces of codimension $n$ but which does not contain proximinal subspaces of codimension greater than or equal to $n+1$. Observe that in this case, $X_n$ contains proximinal subspaces of codimension $k$ for every $1\leq k \leq n$ (see \cite[Proposition 3.7]{Rmoutil}, for instance). The examples can be constructed in such a way that their duals and biduals are strictly convex, and the slices of their unit balls have diameter as close to $2$ as desired. All of this is contained in our section \ref{section_1}.

There is a related problem proposed by Gilles Godefroy in 2001 about the lineability of the set of norm attaining functionals. Let us recall first some notation. For a Banach space $X$, write $X^*$ to denote its topological dual and $\NA(X)$ to denote the set of those functionals $f$ in $X^*$ which attain their norm on $X$ (i.e.\ $\|f\|=f(x)$ for some $x\in X$ with $\|x\|=1$). Godefroy's problem is stated as follows  \cite[Problem~III]{Godefroy}, \cite[Question 2.26]{Band-Godefroy}:
\begin{verse}
Does the set $\NA(X)$ contain two-dimensional subspaces for every\\ infinite-dimensional Banach space $X$?
\end{verse}

This problem has been solved in the negative by Martin Rmoutil \cite{Rmoutil} by showing that $\NA(\mathcal{R})$ contains no two-dimensional subspace. Let us observe that if $X$ contains a proximinal subspace $Y$ of finite codimension $n$, then $Y^\perp:=\{f\in X^*\colon f|_Y=0\}$ is contained in $\NA(X)$ (see \cite[Proposition~III.4]{Godefroy}), so this latter set contains a $n$-dimensional linear subspace. But the fact that $Y^\perp\subset \NA(X)$ and the one that $Y$ is proximinal are not equivalent in general, see \cite[Section 2]{InduPLMS1982} for instance. Nevertheless, Rmoutil showed that both properties are equivalent for finite-codimensional subspaces of $\mathcal{R}$, getting thus the negative solution to Godefroy's problem. We refer the reader to \cite{KLMW-Jussieu} to find more examples of Banach spaces containing no subspace of dimension two in their set of norm-attaining functionals.

The result of Rmoutil provides an example of space whose set of norm-attaining functionals is ``extremely non-lineable''.
There are many ways in which $\NA(X)$ can be ``lineable'' (i.e.\ contains infinite-dimensional linear spaces) for a non-reflexive Banach space $X$. On the one hand, before Read's and Rmoutil's results, it was not known whether for every infinite-dimensional Banach space $X$ the set $\NA(X)$ is lineable, as happens in the known cases. For instance, it is clear that $\NA(c_0)$ contains a linear subspace, a fact which extends to every Banach space with a monotone Schauder basis \cite[Theorem 3.1]{AcoAizAronGar}; it is also easy to show that $\NA(C(K))$ contains an infinite-dimensional linear space for every infinite Hausdorff compact topological space $K$ \cite[Theorem 2.1]{AcoAizAronGar}. If $X$ is isometrically isomorphic to an infinite-dimensional dual space, then it is clear that $\NA(X)$ contains a closed infinite-dimensional subspace: namely, the isometric predual of $X$. There are also some results stating when a Banach space can be equivalently renormed in such a way that the set of norm-attaining functionals for the new norm contains a linear subspace or a closed infinite-dimensional linear subspace. On the one hand, every Banach space containing an infinite-dimensional separable quotient can be renormed to get that the set of norm-attaining functionals contains an infinite-dimensional linear subspace \cite[Corollary 3.3]{GarPug}. On the other hand, a separable Asplund space $X$ can be renormed in such a way that the set of norm-attaining functionals contains a closed infinite-dimensional vector space if and only if $X^*$ contains an infinite-dimensional reflexive subspace \cite[Theorem 2.15]{Band-Godefroy}.

For every $1<n<\infty$, we provide, also in section \ref{section_1}, examples of infinite-dimensional Banach spaces $X$ for which $\NA(X)$ contains $n$-dimensional subspaces but it contains no subspace of dimension $n+1$, completing thus the picture about the magnitude of lineability which is possible for the set of norm-attaining functionals of a Banach space. This answers a question that Richard Aron possed after a talk on the contents of \cite{KLMW-Jussieu} during the \emph{Workshop on infinite dimensional analysis 2018}, held in Valencia in February 2018.

Next, we deal in section \ref{section_2} with a stronger version of proximinality introduced by V.~Indumathi and G.~Godefroy \cite{GodeInduRevMatComp}. A proximinal subspace $Y$ of a Banach space $X$ is said to be \emph{strongly proximinal} if for every $x\in X$ and every $\varepsilon>0$, there is $\delta>0$ such that $\dist(y,P_Y(x))<\varepsilon$ whenever $y\in P_Y(x,\delta):=\{y\in Y\colon \|x-y\|<\dist(x,Y)+\delta\}$. A closed hyperplane is strongly proximinal if and only if it is the kernel of a functional at which the dual norm is strongly subdifferentiable \cite[Lemma 1.1]{GodeInduRevMatComp} (see the definition of strong subdifferentiability at the end of the introduction). There are Banach spaces containing no strongly proximinal hyperplanes, as it is the case of the natural predual of the Hardy space $H^1(\D)$ by \cite[Proposition III.4.5]{D-G-Z} (see \cite[p.~117]{GodeInduRevMatComp}). We show that $\mathcal{R}$ has no strongly proximinal hyperplanes, and this also happens for other spaces constructed in \cite{KLMW-Jussieu} having no proximinal two-codimensional subspaces. We actually do not know whether there exists a Banach space $X$ having strongly proximinal hyperplanes but containing no proximinal subspaces of codimension $2$. Our main result here is to show that given $1<n<\infty$ and $1\leq k <n$, there is an infinite-dimensional Banach space $X$ containing strongly proximinal subspaces of codimension $k$ but not of codimension greater than $k$, contains proximinal subspaces of codimension $n$ but not of codimension greater than or equal to $n+1$. We do not know whether the case $k=n$ is possible.

Finally, we devote section \ref{section_3} to possible infinite-dimensional version of the previous results. After a talk on these topics at the \emph{14th ILJU School of Mathematics -- Banach Spaces and Related Topics} (Seoul, Republic of Korea, January 2019), Richard Aron asked whether it is possible to construct a non-separable Banach space $X$ such that $\NA(X)$ contains infinite-dimensional separable linear subspaces but no non-separable linear subspaces. The answer to the question is positive, and actually something more can be proved related to factor-reflexive subspaces. Recall that a subspace $Y$ of a Banach space $X$ is called \emph{factor reflexive} if $X/Y$ is reflexive. We first show that there is a \emph{non-separable} Banach space $X_\infty$ such that $\NA(X_\infty)$ contains infinite-dimensional separable subspaces but no non-separable subspaces, that $X_\infty$ contains $n$-codimensional proximinal subspaces for every $n\in \N$, but every proximinal factor reflexive subspace $X_\infty$ is of finite-codimension and it is not strongly proximinal. Besides, we construct a \emph{non-separable} Banach space $\overline{X}_\infty$ containing a  proximinal subspace $Y$ such that $\overline{X}_\infty/Y$ is infinite-dimensional, reflexive, and separable, so $\NA(\overline{X}_\infty)$ contains infinite-dimensional closed separable subspaces, but $\NA(\overline{X}_\infty)$ contains no non-separable subspaces so, in particular, every factor reflexive proximinal subspace $Y$ of $\overline{X}_\infty$ satisfies that $\overline{X}_\infty/Y$ is separable.

We would like to send the reader's attention to the paper \cite{GodeInduJAT} by G.~Godefroy and V.~Indumathi, where some interesting related problems, which are open to the best of our knowledge, are stated.

\subsection{Notation and terminology}

Let us finish this introduction with the notation and terminology which we will need along the paper. In this paper, we only deal with real Banach spaces. For a Banach space $X$, $X^*$ denotes its topological dual, $B_X$ and $S_X$ are, respectively, the closed unit ball and the unit sphere of $X$.

A Banach space $X$ (or its norm) is said to be \emph{strictly convex} if $S_X$ does not contain any non-trivial segment or, equivalently, if $\|x+y\|<2$ whenever $x,y\in B_X$, $x\neq y$. The space $X$ is said to be \emph{smooth} if its norm is G\^{a}teaux differentiable at every non-zero element. The norm of $X$ is \emph{strongly subdifferentiable} at a point $x\in X$ \cite{FranPaya} if the one-side limit
$$
\lim_{t\to 0^+} \frac{\|x+th\|-\|x\|}{t}
$$
exists uniformly for $h\in B_X$. The norm of $X$ is Fr\'{e}chet differentiable at $x$ if and only if it is both G\^{a}teaux differentiable and strongly subdifferentiable at $x$ \cite[p.~48]{FranPaya}. The norm of $X$ is said to be \emph{rough} if there is $0<\rho\leq 2$ such that
$$
\limsup_{\|h\|\to 0}\frac{\|x+h\|+\|x-h\|-2\|x\|}{\|h\|}\geq \rho
$$
for every $x\in X$, and in this case we say that the norm of $X$ is $\rho$-\emph{rough}. Observe that if the norm of $X$ is rough, then the norm of $X$ cannot be Fr\'{e}chet differentiable at any non-zero element. It is known that the norm of the dual of a Banach space is $\rho$-rough if and only if all the slices of the unit ball of the space has diameter greater than or equal to $\rho$ \cite[Proposition I.1.11]{D-G-Z}. We refer the reader to the books \cite{D-G-Z}, \cite{FHHMPZ}, and \cite{FHHMZ} for more information and background on the above notions.

For $1\leq p \leq \infty$, we write $\ell_p^{(k)}$ to denote the $k$-dimensional $\ell_p$ space. Given two Banach spaces $X_1$ and $X_2$, we write $X_1\oplus_p X_2$ to denote the $\ell_p$-sum of the spaces. It is easy to check that for $1<p<\infty$, the space $X_1\oplus_p X_2$ is strictly convex (respectively smooth) if and only if the spaces $X_1$ and $X_2$ are strictly convex (resp.\ smooth), see \cite[Excersises 5.2 and 5.39]{Megginson} for instance. The next result is also easy to check, but we have not found any reference, so we state it here and include an idea of how to prove it.

\begin{remark}\label{remark-Frechet-baja}
Let $1<p<\infty$ and two Banach spaces $X_1$ and $X_2$ be given, and write $X=X_1\oplus_p X_2$. If the norm of $X$ is Fr\'{e}chet smooth at a point $(x_1,x_2)$ with $x_2\neq 0$, then the norm of $X_2$ is Fr\'{e}chet smooth at $x_2$.
\end{remark}

Indeed, this follows straightforwardly from the fact that the $p$-power of the norm of $X$ is also Fr\'{e}chet differentiable at the point $(x_1,x_2)$ and the fact that a convex function $f:Z\longrightarrow \R$ defined on a Banach space $Z$ is Fr\'{e}chet differentiable at $z\in Z$ if and only if
$$
\lim_{t\to 0} \frac{f(z+th)+f(z-th)-2f(z)}{t}=0
$$
uniformly for $h\in S_Z$ \cite[Lemma 7.4]{FHHMZ}.

The next result follows easily from the definition of strong subdifferentiability (or better from \cite[Theorem 1.2.iii]{FranPaya}) and the shape of the duality function of an $\ell_1$-sum of Banach spaces.

\begin{remark}\label{remark_2}
Let two Banach spaces $X_1$ and $X_2$ be given, and write $X=X_1\oplus_1 X_2$. If the norm of $X$ is strongly subdifferentiable at a point $(x_1,x_2)$ with $x_2\neq 0$, then the norm of $X_2$ is strongly subdifferentiable at $x_2/\|x_2\|$.
\end{remark}

\section{The $n$-by-$n$ version of Read's and Rmoutil's examples}\label{section_1}
Let us start by stating the main result of the paper. We write it in such a way that solves Singer's and Godefroy's problems at the same time for a fixed $n$.

\begin{theorem}\label{theorem:main}
Let $1\leq n<\infty$. Then there is an infinite-dimensional Banach space $X_n$ such that
\begin{itemize}
  \item[(a)] $X_n$ contains $n$-codimensional proximinal subspaces,
  \item[(b)] but it does not contain $m$-codimensional proximinal subspaces for $m\geq n+1$;
  \item[(c)] $\NA(X_n)$ contains $n$-dimensional subspaces,
  \item[(d)] but it does not contain subspaces of dimension greater than $n$.
\end{itemize}
Moreover, for every $0<\varepsilon<2$, such spaces $X_n$ can be constructed to be isomorphic to $c_0$ and in such a way that $X_n$ is strictly convex and smooth, $X_n^*$ is strictly convex and smooth, $X_n^{**}$ is strictly convex, and every slice of the unit ball of $X_n$ has diameter greater than or equal to $2-\varepsilon$ or, equivalently, the norm of $X_n^*$ is $(2-\varepsilon)$-rough.
\end{theorem}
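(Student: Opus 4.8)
\emph{Proof proposal.} The plan is to isolate what is genuinely new, namely parts (a) and (d), and then to build the space; parts (b) and (c) come for free. Indeed, if $Y\subseteq X_n$ is proximinal of finite codimension $m$ then $Y^\perp\subseteq\NA(X_n)$ by \cite[Proposition~III.4]{Godefroy} and $\dim Y^\perp=m$, so (c) follows from (a) by taking $m=n$, while a proximinal subspace of codimension $m\geq n+1$ would produce an $(n+1)$-dimensional subspace of $\NA(X_n)$, contradicting (d). Thus it is enough to produce, for each $n\geq2$, one Banach space satisfying (a), (d) and the isometric ``moreover'' clause (for $n=1$ this is just a Read-type space, renormed if needed).

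A transparent first attempt already gives almost everything. Let $\mathcal{R}$ be a space of the kind built by Read and refined in \cite{KLMW-Jussieu}: isomorphic to $c_0$, with $\mathcal{R}$, $\mathcal{R}^*$ strictly convex and smooth, $\mathcal{R}^{**}$ strictly convex, the norm of $\mathcal{R}^*$ $(2-\varepsilon)$-rough, no proximinal subspace of codimension $\geq2$, and $\NA(\mathcal{R})$ containing no two-dimensional subspace; fix $1<p,q<\infty$ (with $1/p+1/p'=1$, $1/q+1/q'=1$) and set $X_n:=\mathcal{R}\oplus_p\ell_q^{(n-1)}$. From the shape of the duality map of an $\ell_p$-sum one gets $\NA(X_n)=\NA(\mathcal{R})\times\ell_{q'}^{(n-1)}$ inside $X_n^*=\mathcal{R}^*\oplus_{p'}\ell_{q'}^{(n-1)}$; hence the coordinate projection $\pi\colon X_n^*\to\mathcal{R}^*$ sends $\NA(X_n)$ into $\NA(\mathcal{R})$, so for any subspace $V\subseteq\NA(X_n)$ the space $\pi(V)$ is a subspace of $\NA(\mathcal{R})$, whence $\dim\pi(V)\leq1$ and $\dim V\leq1+\dim\ker\pi=n$; this is (d). For (a), picking $f\in\NA(\mathcal{R})$ attaining at $x_1\in S_\mathcal{R}$, the subspace $\ker f\oplus\{0\}$ is the kernel of the norm-one projection $(r,v)\mapsto\bigl(\tfrac{f(r)}{\|f\|}x_1,\,v\bigr)$ and is therefore proximinal of codimension $n$. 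Moreover $X_n\cong c_0$, and $X_n$, $X_n^*$ are strictly convex and smooth and $X_n^{**}$ is strictly convex. The single item that fails is the roughness: a functional $(0,w)$ with $\|w\|_{q'}=1$ strongly exposes $(0,v_0)$ in $B_{X_n}$ (with $v_0$ the point of $S_{\ell_q^{(n-1)}}$ exposed by $w$), since along such a slice the $\ell_q^{(n-1)}$-coordinate runs over a shrinking slice of $B_{\ell_q^{(n-1)}}$ and so the $\mathcal{R}$-coordinate is confined to a ball of vanishing radius; equivalently the norm of $X_n^*$ is Fr\'echet differentiable at $(0,w)$, hence not $(2-\varepsilon)$-rough.

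To repair this, one replaces the $\ell_p$-sum by a carefully chosen equivalent norm on $\mathcal{R}\oplus\R^{n-1}\cong c_0$ --- most naturally an ``$n$-by-$n$'' version of Read's construction in which $n$ coordinate directions, rather than one, remain proximinal-friendly --- designed to have: (i) functionals $f_1,\dots,f_n\in S_{X_n^*}$ and points $x_1,\dots,x_n\in S_{X_n}$ with $f_i(x_j)=\delta_{ij}$ and $\bigl\|\sum_{i=1}^n f_i(x)\,x_i\bigr\|\leq\|x\|$ for all $x$, i.e.\ a norm-one projection onto $\mathrm{span}\{x_1,\dots,x_n\}$; (ii) a coordinate projection $\pi\colon X_n^*\to\mathcal{R}^*$ with $(n-1)$-dimensional kernel and $\pi(\NA(X_n))\subseteq\NA(\mathcal{R})$, where $\mathcal{R}$ is Read--KLMW-type as above; and (iii) $X_n$, $X_n^*$ strictly convex and smooth, $X_n^{**}$ strictly convex, and the norm of $X_n^*$ $(2-\varepsilon)$-rough --- the crucial design requirement being that the new norm-attaining functionals $f_1,\dots,f_n$ and their combinations attain their norm at points that are exposed but \emph{not} strongly exposed, which is precisely what is lost in the $\ell_p$-sum and what keeps every slice of $B_{X_n}$ of diameter $\geq2-\varepsilon$. (The strict convexity and smoothness of the dual and bidual would be obtained by combining those of $\mathcal{R}$ with $\ell_p$-sum computations in the spirit of Remarks~\ref{remark-Frechet-baja} and \ref{remark_2}; the roughness by showing that every slice of $B_{X_n}$ still contains a slice of $B_\mathcal{R}$ scaled by a factor arbitrarily close to $1$.)

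Granting such a norm, the rest runs as in the first attempt. Part (d) is the projection argument above using (ii) together with Rmoutil's theorem \cite{Rmoutil}; and for (a), $Y:=\bigcap_{i=1}^n\ker f_i$ has codimension $n$ and, by (i), $x\mapsto x-\sum_i f_i(x)x_i$ selects metric projections onto $Y$, since $\dist(x,Y)=\bigl\|\sum_i f_i(x)x_i\bigr\|$ for every $x$, so $Y$ is proximinal; then (c) follows from (a), (b) follows from (d), and the isometric conditions are built into the norm. The main obstacle is the construction in the previous paragraph: running an $n$-by-$n$ form of Read's delicate argument while simultaneously controlling the convexity and smoothness of the norm, of its dual and of its bidual, and the roughness of the dual --- in particular making the $n$ new norm-attaining functionals expose but not strongly expose any point, without which $X_n^*$ could not be $(2-\varepsilon)$-rough.
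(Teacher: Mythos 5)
Your reduction of (b) and (c) to (a) and (d) is correct, and your first construction $\mathcal{R}\oplus_p\ell_q^{(n-1)}$ does establish (a)--(d) together with the isomorphism to $c_0$ and the convexity/smoothness assertions; your diagnosis that it fails the roughness clause (because $(0,w)$ is a point of Fr\'echet differentiability of the dual norm, so the corresponding slices of $B_{X_n}$ are small) is also correct. The problem is what comes next: the entire ``moreover'' part of the theorem is left resting on a norm with properties (i)--(iii) that you never construct and that you yourself flag as ``the main obstacle''. As written this is a genuine gap --- the statement includes the $(2-\varepsilon)$-roughness of $X_n^*$, and for $n\geq 2$ no space with that property has actually been produced.

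The missing idea is much simpler than the ``$n$-by-$n$ form of Read's delicate argument'' you envisage. Replace the finite-dimensional summand by further copies of the Read--KLMW space itself: the paper takes $X_{n+1}=X_n\oplus_p X_1$, i.e.\ the $\ell_p$-sum of $n$ copies of the renorming $X_1$ of $c_0$ from \cite[Proposition 4.7]{KLMW-Jussieu}. Your own two arguments then go through verbatim: the product of $n$ proximinal hyperplanes is proximinal of codimension $n$ (Lemma \ref{lemma-proximinality}.a), and each coordinate projection sends linear subspaces of $\NA(X_n)$ into $\NA(X_1)$, so each factor contributes at most one dimension (Corollary \ref{cor-contains_less_d1+d2}). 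The roughness now survives because \emph{every} summand has all slices of its unit ball of diameter at least $2-\varepsilon$, and this property is stable under $\ell_p$-sums for $1<p<\infty$ (\cite[Theorem 2.4]{AcoBecLopez}, whose proof adapts from diameter $2$ to diameter $2-\varepsilon$). It is precisely the presence of a smooth finite-dimensional factor, not the $\ell_p$-sum operation itself, that destroys roughness; the paper exploits this same phenomenon deliberately in Theorem \ref{theorem:strong-n-k}, where $\ell_p^{(k)}\oplus_p X_{n-k}$ is used to manufacture strongly proximinal subspaces of codimension exactly $k$.
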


We need some preliminary results. The first one deals with the set of norm-attaining functionals of an $\ell_p$-sum of spaces for $1<p\leq \infty$.

\begin{lemma}\label{lemma_NAsum}
Let $X_1$, $X_2$ be Banach spaces, let $1<p\leq \infty$, and write $X=X_1\oplus_p X_2$. Then, $\NA(X)\subseteq \NA(X_1)\times \NA(X_2)$.
\end{lemma}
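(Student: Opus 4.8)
The plan is to take a functional $f = (f_1, f_2) \in X^* = X_1^* \oplus_q X_2^*$ (where $1/p + 1/q = 1$, with $q = 1$ when $p = \infty$) that attains its norm, and show that each $f_i$ attains its norm on $X_i$. First I would recall the precise shape of the dual norm: for $1 < p < \infty$ the dual is the $\ell_q$-sum $X_1^* \oplus_q X_2^*$, so $\|f\| = \left(\|f_1\|^q + \|f_2\|^q\right)^{1/q}$, while for $p = \infty$ we have the $\ell_1$-sum $X_1^* \oplus_1 X_2^*$ and $\|f\| = \|f_1\| + \|f_2\|$. Suppose $f$ attains its norm at some $x = (x_1, x_2) \in S_X$, i.e. $f_1(x_1) + f_2(x_2) = \|f\|$.

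The key step is an equality case in Hölder's (or the triangle) inequality. We always have $f_1(x_1) + f_2(x_2) \le \|f_1\|\,\|x_1\| + \|f_2\|\,\|x_2\|$, and then by Hölder applied to the pairs $(\|f_1\|, \|f_2\|)$ and $(\|x_1\|, \|x_2\|)$ this is at most $\left(\|f_1\|^q + \|f_2\|^q\right)^{1/q}\left(\|x_1\|^p + \|x_2\|^p\right)^{1/p} = \|f\|\,\|x\| = \|f\|$ (and the analogous two-step estimate with the $\ell_1$-$\ell_\infty$ duality when $p=\infty$). Since the two ends agree, both inequalities are equalities. Equality in the first forces $f_i(x_i) = \|f_i\|\,\|x_i\|$ for $i = 1, 2$. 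Equality in Hölder forces $(\|x_1\|^p, \|x_2\|^p)$ to be proportional to $(\|f_1\|^q, \|f_2\|^q)$; the only subtlety is the degenerate possibility that some $\|x_i\| = 0$, but then the corresponding term drops out and one checks directly (using that the remaining block must itself be a norm-attaining situation) that $f_i = 0 \in \NA(X_i)$ trivially.

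It remains to upgrade $f_i(x_i) = \|f_i\|\,\|x_i\|$ to the statement that $f_i \in \NA(X_i)$. If $\|x_i\| > 0$, then $x_i/\|x_i\| \in S_{X_i}$ and $f_i(x_i/\|x_i\|) = \|f_i\|$, so $f_i$ attains its norm. If $\|x_i\| = 0$ then, as just noted, necessarily $\|f_i\| = 0$ (the proportionality from the Hölder equality case forces it, since the other block carries all the mass and $\|f\| > 0$ unless $f = 0$, in which case the conclusion is trivial), and $0 \in \NA(X_i)$ by convention. Hence $f = (f_1, f_2) \in \NA(X_1) \times \NA(X_2)$.

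The main obstacle, such as it is, is purely bookkeeping: handling the boundary cases where a coordinate of $x$ vanishes, and writing the $p = \infty$ case (where the dual is an $\ell_1$-sum and one uses $\|f\| = \|f_1\| + \|f_2\|$ together with $\|x_i\| \le 1$) in parallel with the $1 < p < \infty$ case without duplicating the argument. I expect no genuine difficulty; the whole proof is the equality-case analysis of a two-line chain of inequalities, and the hypothesis $p > 1$ is exactly what makes the $\ell_q$-sum structure (or, at $p=\infty$, the $\ell_1$-structure of the dual) available.
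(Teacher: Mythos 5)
Your proposal is correct and follows essentially the same route as the paper: write the dual as an $\ell_q$-sum, chain the Hölder-type inequalities, extract $f_i(x_i)=\|f_i\|\,\|x_i\|$ from the equality case, and handle the degenerate case $x_i=0$ by showing $f_i=0$ (the paper does this via $\|f\|=\|f_2\|$ and $q<\infty$ rather than via the Hölder proportionality, but this is an inessential difference). No gaps.
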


\begin{proof}
Write $q=p/(p-1)$ if $1<p<\infty$ and $q=1$ if $p=\infty$. Let $f=(f_1,f_2)\in \NA(X)$ and let $x=(x_1,x_2)\in S_X$ such that $|f(x)|=\|f\|$. Then, we have that
\begin{align*}
  \|f\|=f(x)=f_1(x_1) + f_2(x_2) &\leq \|f_1\|\|x_1\| + \|f_2\|\|x_2\|  \\
   &\leq \bigl\|\bigl(\|f_1\|,\|f_2\|\bigr)\bigr\|_q\, \bigl\|\bigl(\|x_1\|,\|x_2\|\bigr)\bigr\|_p = \|f\|.
\end{align*}
Therefore, we get that
$$
f_1(x_1)=\|f_1\|\|x_1\| \quad \text{and} \quad f_2(x_2)=\|f_2\|\|x_2\|.
$$
Let us show that $f_1\in \NA(X_1)$, being the result for $f_2$ completely analogous. Indeed, if $x_1\neq 0$, then we clearly have that $f_1$ attains its norm at $x_1/\|x_1\|$. If, otherwise, $x_1=0$, then $$\|f\|=f(x)=f_2(x_2)\leq \|f_2\|\leq \|f\|,$$ so $\|f\|=\|f_2\|$. Since $q<\infty$ (as $p\neq 1$), this implies that $f_1=0$, so $f_1\in \NA(X)$ trivially.
\end{proof}

Let us comment that the above result is false for $p=1$ as, for instance, $$\NA(\ell_1^{(2)})=\bigl(\{-1,1\}\times [-1,1]\bigr)\cup \bigl([-1,1]\times \{-1,1\}\bigr).$$

The following observation is a direct consequence of Lemma \ref{lemma_NAsum}.

\begin{corollary}\label{cor-contains_less_d1+d2}
Let $X_1$, $X_2$ be Banach spaces, let $1<p\leq \infty$, and write $X=X_1\oplus_p X_2$. If $\NA(X_i)$ does not contain linear subspaces of dimension greater than $d_i$ for $i=1,2$, then $\NA(X)$ does not contain linear subspaces of dimension greater than $d_1+d_2$.
\end{corollary}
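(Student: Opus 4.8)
The plan is to use Lemma \ref{lemma_NAsum} essentially as a black box and then reduce everything to a short linear-algebra argument about projections. Recall that $X^*=X_1^*\oplus_q X_2^*$ (with $q$ the conjugate exponent, $q=1$ when $p=\infty$), so that we may speak of the coordinate projections $\pi_i\colon X^*\longrightarrow X_i^*$, $\pi_i(f_1,f_2)=f_i$, which are linear maps.

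First I would take an arbitrary linear subspace $V\subseteq \NA(X)$ and set $V_i:=\pi_i(V)\subseteq X_i^*$ for $i=1,2$. Since $\pi_i$ is linear, each $V_i$ is a linear subspace of $X_i^*$. Next I would check that $V_i\subseteq \NA(X_i)$: given $f_i\in V_i$, write $f_i=\pi_i(g)$ for some $g=(f_1,f_2)\in V\subseteq \NA(X)$; by Lemma \ref{lemma_NAsum} we have $g\in \NA(X_1)\times \NA(X_2)$, hence $f_i\in \NA(X_i)$. Therefore $V_i$ is a linear subspace of $\NA(X_i)$, and the hypothesis gives $\dim V_i\leq d_i$.

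Finally I would observe that the map $V\longrightarrow V_1\times V_2$, $g\mapsto (\pi_1 g,\pi_2 g)$, is an injective linear map (it is just the identity, viewing $g=(\pi_1 g,\pi_2 g)$), so
$$
\dim V\leq \dim(V_1\times V_2)=\dim V_1+\dim V_2\leq d_1+d_2.
$$
Since $V$ was an arbitrary linear subspace of $\NA(X)$, this shows $\NA(X)$ contains no linear subspace of dimension greater than $d_1+d_2$, as desired.

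There is essentially no serious obstacle here: once Lemma \ref{lemma_NAsum} is available, the only points requiring a word are that projections of a linear subspace are again linear subspaces and that a subspace of $V_1\times V_2$ has dimension at most $\dim V_1+\dim V_2$, both of which are routine. The one place to be mildly careful is the bookkeeping of the identification $X^*=X_1^*\oplus_q X_2^*$ and the fact that the inclusion $\NA(X)\subseteq \NA(X_1)\times \NA(X_2)$ from the lemma is precisely what forces $\pi_i(V)\subseteq \NA(X_i)$.
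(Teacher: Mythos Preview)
Your argument is correct and is exactly the natural way to make precise the paper's claim that the corollary is ``a direct consequence of Lemma~\ref{lemma_NAsum}''; the paper gives no further proof. The projection-and-dimension count you carry out is the intended reasoning.
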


Our next preliminary result deals with the behaviour of the proximinality of subspaces and $\ell_p$-sums. All the results are well known to experts but, for the sake of completeness, we present an omnibus lemma here which also includes the results for strong proximinality we will use later on.

\begin{lemma}\label{lemma-proximinality}
Let $X_1$, $X_2$, $Z$ be Banach spaces, let $1< p\leq \infty$, and write $X=X_1\oplus_p X_2$.
\begin{itemize}
\item[(a)] If $Y_1\leq X_1$ and $Y_2\leq X_2$ are proximinal subspaces, then $Y_1\times Y_2$ is proximinal in $X$.
\item[(b)] $\{0\}\times X_2$ is strongly proximinal in $X$.
\item[(c)] If $Y$ is a factor reflexive (in particular, finite-codimensional) proximinal subspace of $Z$, then $Y^\perp\subset \NA(Z)$.
\item[(d)] If $Y$ is a finite-codimensional strongly proximinal subspace of $Z$, then the norm of $Z^*$ is strongly subdifferentiable at every norm-one element of $Y^\perp$.
\end{itemize}
\end{lemma}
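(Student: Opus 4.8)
The four assertions are essentially independent, and the plan is as follows.

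\emph{Items (a) and (b)} are direct distance computations in $X=X_1\oplus_p X_2$. For (a) I would use that the infimum defining the distance to a product subspace separates: for $1<p<\infty$ one has $\dist\bigl((x_1,x_2),Y_1\times Y_2\bigr)^p=\dist(x_1,Y_1)^p+\dist(x_2,Y_2)^p$, and for $p=\infty$ the same identity holds with a maximum in place of the $\ell_p$-norm; choosing $y_i\in P_{Y_i}(x_i)$ (nonempty by hypothesis) then produces a point $(y_1,y_2)$ of $Y_1\times Y_2$ realising the distance. For (b), the same computation yields $\dist\bigl((x_1,x_2),\{0\}\times X_2\bigr)=\|x_1\|$, with metric projection $\{(0,x_2)\}$ when $p<\infty$ and $\{0\}\times\bigl(x_2+\|x_1\|B_{X_2}\bigr)$ when $p=\infty$; a one-line estimate — from $(\|x_1\|+\delta)^p-\|x_1\|^p\to 0$ as $\delta\to 0^+$ when $p<\infty$, and by sliding along the segment towards $x_2$ when $p=\infty$ — shows that $(0,z)$ lying in the $\delta$-approximate projection $P_{\{0\}\times X_2}\bigl((x_1,x_2),\delta\bigr)$ forces it within a prescribed distance of the metric projection once $\delta$ is small enough, which is exactly strong proximinality.

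\emph{Item (c).} Given $f\in Y^\perp$, we may assume $f\neq 0$. Let $\tilde f$ be the functional induced by $f$ on $Z/Y$; since $f$ vanishes on $Y$, a routine argument gives $\|\tilde f\|=\|f\|$. As $Z/Y$ is reflexive, $B_{Z/Y}$ is weakly compact and $\tilde f$, being weakly continuous, attains its norm at some $\xi\in S_{Z/Y}$. Pick $z_0\in Z$ with $z_0+Y=\xi$, so $\dist(z_0,Y)=\|\xi\|=1$; by proximinality of $Y$ there is $y_0\in P_Y(z_0)$, and then $z:=z_0-y_0$ satisfies $\|z\|=\dist(z_0,Y)=1$, $z+Y=\xi$, and $f(z)=f(z_0)=\tilde f(\xi)=\|f\|$. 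Hence $f\in\NA(Z)$.

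\emph{Item (d)} is the substantive one, and the plan is to reduce it to the hyperplane case. By \cite[Lemma 1.1]{GodeInduRevMatComp}, the dual norm of $Z^*$ is strongly subdifferentiable at a norm-one $f\in Y^\perp$ precisely when the hyperplane $\ker f$ is strongly proximinal in $Z$; since $Y\subseteq\ker f$ and $\ker f$ has finite codimension, it therefore suffices to prove the following transitivity fact: \emph{if $Y$ is a finite-codimensional strongly proximinal subspace of $Z$, then every closed subspace $H$ with $Y\subseteq H\subseteq Z$ is strongly proximinal}. I would argue this by contradiction. If $H$ fails strong proximinality at some $z\in Z$, pick $h_n\in H$ with $\|z-h_n\|\to\dist(z,H)$ but $\dist(h_n,P_H(z))\geq\varepsilon$ for all $n$. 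Passing to the finite-dimensional quotient $Z/Y$, where $H/Y$ is automatically strongly proximinal by a compactness argument, the images $h_n+Y$ approach the compact, nonempty metric projection of $z+Y$ onto $H/Y$; lifting a limit point of that projection back to $Z$ by means of proximinality of $Y$ produces a \emph{fixed} element $h^*\in P_H(z)$ with $\dist(h_n-h^*,Y)\to 0$. Choosing $y_n\in Y$ with $h_n-h^*-y_n\to 0$, one checks that $\|(z-h^*)-y_n\|\to\dist(z-h^*,Y)$, so strong proximinality of $Y$ \emph{at the single point $z-h^*$} provides $\tilde y_n\in P_Y(z-h^*)$ with $y_n-\tilde y_n\to 0$; then $h^*+\tilde y_n\in P_H(z)$ lies within $o(1)$ of $h_n$, contradicting $\dist(h_n,P_H(z))\geq\varepsilon$. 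The genuinely delicate point is precisely this reduction to the single base point $z-h^*$: a crude approach would need strong proximinality of $Y$ uniformly over a bounded set of base points, which the pointwise hypothesis does not guarantee, and it is the finite-dimensionality of $Z/Y$ — giving a convergent subsequence of nearest points to lift — that makes the reduction possible.
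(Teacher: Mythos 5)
Your argument is correct in all four items, but it is genuinely different in character from what the paper does: the paper does not prove Lemma~\ref{lemma-proximinality} at all, it assembles the four statements from the literature --- (a) from \cite[Corollary 4.2]{BandLiLinNarayana} and \cite[Remark 3.1]{IndumathiPAMS2005}, (b) from \cite{JayaPaul2015} and \cite{LatiNaraTJM2006}, (c) from \cite[Lemma 2.2]{Band-Godefroy}, and (d) from an argument of Indumathi that passes through the characterisation of strongly proximinal finite-codimensional subspaces in \cite[Theorem 2.5]{GodeInduRevMatComp} applied to a basis of $Y^\perp$, combined with \cite[Lemma 1.1]{GodeInduRevMatComp}. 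Your computations for (a) and (b) (splitting of the infimum over a product, $\dist\bigl((x_1,x_2),\{0\}\times X_2\bigr)=\|x_1\|$, and the explicit approximate projections) and your proof of (c) (norm attainment of the induced functional on the reflexive quotient, lifted back by proximinality of $Y$) are exactly the standard arguments behind those references, so nothing is lost there. The real divergence is (d): instead of invoking Theorem 2.5 of \cite{GodeInduRevMatComp}, you prove the transitivity statement that a closed intermediate subspace $Y\subseteq H\subseteq Z$ inherits strong proximinality from a finite-codimensional strongly proximinal $Y$, and then specialise to $H=\ker f$ and apply only \cite[Lemma 1.1]{GodeInduRevMatComp}. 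I have checked the details: the key identities $\dist(Qz,QH)=\dist(z,H)$ (valid because $Y\subseteq H$) and $\dist(z-h^*,Y)=\dist(z,H)$ for $h^*\in P_H(z)$ both hold, the lift of a limit point $\xi\in P_{QH}(Qz)$ via proximinality of $Y$ does produce $h^*\in P_H(z)$ with $Qh^*=\xi$ (incidentally establishing proximinality of $H$, which is needed before strong proximinality is even meaningful --- worth stating explicitly), and the final estimate $\|h_n-(h^*+\tilde y_n)\|\leq\|h_n-h^*-y_n\|+\|y_n-\tilde y_n\|\to 0$ delivers the contradiction. Your approach buys a self-contained proof (modulo the elementary hyperplane characterisation) and a transitivity lemma of independent interest; the paper's approach buys brevity and stays within results already on record. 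You correctly identify the delicate point: reducing to strong proximinality of $Y$ at the single base point $z-h^*$, which is exactly what the finite-dimensionality of $Z/Y$ (compactness of $P_{QH}(Qz)$ and extraction of a convergent subsequence of $(Qh_n)$) makes possible.
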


Let us give an indication of how to get the above results from previous references.
Item (a) is contained in \cite[Corollary 4.2]{BandLiLinNarayana} for $1< p<\infty$ and in \cite[Remark 3.1]{IndumathiPAMS2005} for $p=\infty$, for instance. Item (b) is easily computable for $1<p<\infty$ and also follows from \cite[Proposition 3.1]{JayaPaul2015} or by \cite[Theorem 4.1]{BandLiLinNarayana}; for $p=\infty$, the argument is not as direct as in the previous case but follows from \cite[Proposition 3.2]{JayaPaul2015} or by \cite[Theorem 2.6]{LatiNaraTJM2006}. (c) can be found in \cite[Lemma 2.2]{Band-Godefroy}, for instance. Finally, item (d) appeared without proof in \cite[Theorem 1.4]{JayaPaul2015}, referring to \cite{GodeInduRevMatComp}; the result does not actually appear in \cite{GodeInduRevMatComp}, but follows from the results of that paper by an argument that professor V.~Indumathi gently gave to us: consider a norm-one element in $Y^\perp$, extend it to a basis of the finite-dimensional subspace $Y^\perp$, and observe that the conditions in \cite[Theorem 2.5]{GodeInduRevMatComp} give the desired result by using \cite[Lemma 1.1]{GodeInduRevMatComp}.

We are now able to prove the main result.

\begin{proof}[Proof of Theorem~\ref{theorem:main}] We give an induction argument. For $n=1$, what we need are just the results of Read and Rmoutil \cite{Read,Rmoutil} cited in the introduction, so we may consider $X_1$ any infinite-dimensional Banach space having no two-dimensional subspaces in its set of norm-attaining functionals as, for instance, $\mathcal{R}$. Suppose that $X_n$ satisfies item (a)--(d) of the theorem. Given $1<p\leq \infty$ fixed, we define $X_{n+1}=X_n \oplus_p X_1$. Then, taking a proximinal subspace $Y_n$ of $X_n$ of codimension $n$ and a proximinal hyperplane $Y_1$ of $X_1$, $Y_n\times Y_1$ is proximinal in $X_{n+1}$ by  Lemma \ref{lemma-proximinality}.a. and, clearly, its codimension is equal to $n+1$. This gives (a), and (c) is a consequence of (a) as $[Y_n\times Y_1]^\perp \subset \NA(X_{n+1})$ by Lemma \ref{lemma-proximinality}.c. On the other hand, Corollary \ref{cor-contains_less_d1+d2} shows that $\NA(X_{n+1})$ does not contain linear subspaces of dimension greater than $n+1$, giving (d). Then, another use of Lemma \ref{lemma-proximinality}.c gives that $X_{n+1}$ does not contain $m$-codimensional proximinal subspaces for $m>n+1$, providing thus (b).

For the moreover part, it is enough to consider $X_1$ a renorming of $c_0$ given in \cite[Proposition 4.7]{KLMW-Jussieu} having these properties, and observe that all the properties are preserved by $\ell_p$-sums for $1<p<\infty$. This is classical and well-known for smoothness and strict convexity as we commented at the end of the introduction. For the size of the diameter of the slices of the unit ball (and therefore for the roughness of the dual norm by \cite[Proposition I.1.11]{D-G-Z}), the result appeared in \cite[Theorem 2.4]{AcoBecLopez} for diameter $2$, but the proof adapts straightforwardly to any other size of the diameter.
\end{proof}

It is shown in \cite{KLMW-Jussieu} that every Banach space which is isomorphic to a subspace of $\ell_\infty$ and contains $c_0$ can be equivalently renormed so that it does not contain proximinal subspaces of codimension two. We can use this fact to get the following result.

\begin{remark}
{\slshape Let $X$ be a Banach space which is isomorphic to a subspace of $\ell_\infty$ and contains $c_0$. Then for $1 < n<\infty$, there is a Banach space $X_n$ isomorphic to $X$ such that:
\begin{itemize}
  \item[(a)] $X_n$ contains $n$-codimensional proximinal subspaces,
  \item[(b)] but it does not contain $m$-codimensional proximinal subspaces for any $m\geq n+1$;
  \item[(c)] $\NA(X_n)$ contains $n$-dimensional subspaces,
  \item[(d)] but it does not contain subspaces of dimension greater than $n$.
\end{itemize}
}

Indeed, we consider $X_1$ to be a renorming of $X$ given in \cite[Theorem 4.1]{KLMW-Jussieu} such that $\NA(X_1)$ does not contain $2$-dimensional subspaces. If $X$ is isomorphic to its $n$-power, we just have to follow the proof of Theorem \ref{theorem:main} starting with this $X_1$. Otherwise, for a general $X$ we may consider $X_n=X_1 \oplus_p \ell_p^{(n-1)}$ for $1<p\leq \infty$, which is isomorphic to $X$, and observe that the proof of Theorem \ref{theorem:main} easily adapts to this case.
\end{remark}

\section{Strong proximinality}\label{section_2}
Our next aim is to deal with strong proximinality. First, we show that Read's space does not have strongly proximinal hyperplanes.

\begin{example}\label{example:Read-no-strongprox-hyperplanes}
{\slshape Read space $\mathcal{R}$ does not contain strongly proximinal hyperplanes.}
\end{example}

Let us give the easy argument. By \cite[Lemma 1.1]{GodeInduRevMatComp} (or Lemma \ref{lemma-proximinality}.d), it is enough to check that the norm of $\mathcal{R}^*$ is nowhere strongly subdifferentiable. As $\mathcal{R}^*$ is smooth \cite[Corollary 5]{KadetsLopezMartin}, any point of strong subdifferentiability is actually a point of Fr\'{e}chet differentiability, but the norm of $\mathcal{R}^*$ is $2/3$-rough \cite[Theorem 6]{KadetsLopezMartin}, so it is not Fr\'{e}chet differentiable at any point.

As we commented earlier, there are more spaces with no proximinal subspaces of codimension two which were constructed in \cite{KLMW-Jussieu}. We show that none of them contains strongly proximinal hyperplanes.

\begin{example}\label{example:others-no-strongprox-hyperplanes}
{\slshape None of the spaces constructed in \cite[Proposition 4.4]{KLMW-Jussieu} containing no proximinal subspaces of codimension two, contains strongly proximinal hyperplanes.}
\end{example}

Let $X$ be a space given in \cite[Proposition 4.4]{KLMW-Jussieu}. As for the previous example, it is enough to prove that $X^*$ does not contain points of strong subdifferentiability of the norm. The trick will be to show that every such a point is actually a point of Fr\'{e}chet differentiability of the norm, and then use that the norm of $X^*$ is rough, getting a contradiction. If $X^*$ is smooth, everything is clear, but we only know that when $X^*$ is separable. In the general case, we only know that $X$ is strictly convex. To deal with this case, we just need the following result which follows from \cite{GodeInduRevMatComp} and we state for the sake of completeness.

\begin{lemma}\label{lemma:strictlyconvex-ssd-frechet}
Let $X$ be a strictly convex Banach space. If the norm of $X^*$ is strongly subdifferentiable at a functional $x^*$, then it is Fr\'{e}chet differentiable at $x^*$.
\end{lemma}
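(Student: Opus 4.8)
The plan is to recall that Fréchet differentiability of the norm at $x^*$ is equivalent to the conjunction of Gâteaux differentiability (i.e.\ smoothness at $x^*$, i.e.\ uniqueness of the norming functional for $x^*$ in $X^{**}$) and strong subdifferentiability at $x^*$, by the characterization quoted from \cite[p.~48]{FranPaya} in the introduction. Since strong subdifferentiability is given by hypothesis, it suffices to prove that the norm of $X^*$ is Gâteaux differentiable at $x^*$, equivalently that $x^*$ is a smooth point of $B_{X^*}$. The point where strict convexity of $X$ enters is exactly here: strict convexity of $X$ is the property dual to smoothness of $X^*$, but only in one direction and only for functionals that attain their norm. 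So the first real step is to show that a point of strong subdifferentiability is necessarily a norm-attaining functional.

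For that step I would use \cite[Theorem 1.2.iii]{FranPaya} (quoted as being available in the excerpt), or more elementarily the following standard fact: if the norm of $X^*$ is strongly subdifferentiable at $x^*$ with $\|x^*\|=1$, then the set of norming functionals $D(x^*):=\{x^{**}\in S_{X^{**}}\colon x^{**}(x^*)=1\}$ is contained in the $w^*$-closure of $B_X$ in a strong sense — in fact strong subdifferentiability forces $D(x^*)\subseteq \overline{B_X}^{\,\|\cdot\|}$, hence $D(x^*)\subseteq B_X$ since $B_X$ is norm-closed. Concretely, strong subdifferentiability means the one-sided limit $\lim_{t\to 0^+}(\|x^*+th\|-\|x^*\|)/t$ exists uniformly for $h\in B_{X^*}$; a gliding-hump/approximation argument then produces, for any $x^{**}\in D(x^*)$, elements of $B_X$ converging to it in norm. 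Thus $x^*$ attains its norm at some $x\in S_X$.

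Once $x^*\in\NA(X)$, the conclusion follows quickly. Suppose for contradiction that the norm of $X^*$ is not Gâteaux differentiable at $x^*$; then there are two distinct $x_1,x_2\in S_X$ with $x_1^*(x_i)$ — I mean $x^*(x_1)=x^*(x_2)=1$ (both are norming, and both lie in $X$ because any norming functional in $X^{**}$ must lie in $B_X$ by the previous step applied to the whole face $D(x^*)$). Then the midpoint $\tfrac12(x_1+x_2)\in S_X$, contradicting strict convexity of $X$. Hence the norm of $X^*$ is Gâteaux differentiable at $x^*$, and combined with the hypothesis of strong subdifferentiability we conclude it is Fréchet differentiable at $x^*$, as quoted from \cite[p.~48]{FranPaya}.

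The main obstacle is the middle step: showing that strong subdifferentiability at $x^*$ forces the \emph{entire} face $D(x^*)$ of norming functionals to sit inside $B_X$ (not merely inside $B_{X^{**}}$), since without this one cannot invoke strict convexity of $X$ to pin down Gâteaux differentiability. This is precisely where one must cite the appropriate statement from \cite{GodeInduRevMatComp} (the paper indicates the lemma "follows from \cite{GodeInduRevMatComp}"); the cleanest route is to quote their characterization of strong subdifferentiability in terms of the behaviour of $D(x^*)$ under the canonical embedding, or to reproduce the short $\varepsilon$–$\delta$ argument turning the uniform one-sided limit into norm-approximation of each norming functional by elements of $B_X$.
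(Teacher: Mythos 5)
Your overall reduction is the same as the paper's: since Fr\'{e}chet differentiability equals G\^{a}teaux differentiability plus strong subdifferentiability, it suffices to show that $x^*$ has a unique norming functional in $X^{**}$, and strict convexity of $X$ should be what pins this down. The problem is the middle step you yourself flag as the main obstacle: the claim that strong subdifferentiability of the dual norm at $x^*$ forces the whole face $D(x^*)=\{x^{**}\in S_{X^{**}}\colon x^{**}(x^*)=1\}$ into the \emph{norm} closure of $B_X$, hence into $X$. This is false. Take $X=c_0$ and $x^*=e_1^*\in\ell_1$: for every $h\in B_{\ell_1}$ and every $0<t<1$ one has $\|e_1^*+th\|_1-1=t\bigl(h_1+\sum_{i\geq 2}|h_i|\bigr)$ exactly, so the dual norm is strongly subdifferentiable at $e_1^*$, yet $D(e_1^*)=\{x^{**}\in B_{\ell_\infty}\colon x^{**}_1=1\}$ contains $(1,1,1,\dots)\notin c_0$. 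So no gliding-hump argument can produce norm-approximants from $B_X$ for an arbitrary element of $D(x^*)$, and your contradiction argument (two distinct norming points \emph{in $S_X$}) cannot get started from this route.

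The correct statement from \cite[Remarks 1.2.2]{GodeInduRevMatComp}, which is what the paper uses, is that strong subdifferentiability forces $D(x^*)$ to be contained in the \emph{weak-star} closure of $A:=\{x\in S_X\colon x^*(x)=1\}$ (in the $c_0$ example this is exactly what happens). The fix is to apply strict convexity \emph{before} taking closures rather than after: strict convexity makes $A$ contain at most one point; since $D(x^*)\neq\emptyset$ and $D(x^*)\subseteq \overline{A}^{\,w^*}$, the set $A$ is a nonempty singleton, and the weak-star closure of a singleton is itself, so $D(x^*)$ is that same singleton. This gives G\^{a}teaux differentiability directly (and, as a byproduct, norm-attainment of $x^*$), after which your final step goes through. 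As written, your argument proves an intermediate claim that is simply not true, so the proof does not stand without this reordering.
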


\begin{proof}
If follows from \cite[Remarks 1.2.2]{GodeInduRevMatComp} that the set $$A^{**}=\{x^{**}\in S_{X^{**}}\colon x^{**}(x^*)=1\}$$ is contained in the weak-star closure of the set $A=\{x\in S_{X}\colon x^{*}(x)=1\}$. As $X$ is strictly convex, $A$ contains just one point, so the same happens to $A^{**}$. But this means that the norm of $X^*$ is G\^{a}teaux differentiable at $x^*$ so, being strongly subdifferentiable, it is actually Fr\'{e}chet differentiable.
\end{proof}

We do not know whether there is a Banach space containing no proximinal subspaces of codimension two and such that its dual norm is strongly subdifferentiable at any point.

\begin{problem}\label{problem:k=1=n}
Does there exist an infinite-dimensional Banach space having strongly proximinal hyperplanes and containing no proximinal subspace of codimension two?
\end{problem}

We now state a version of Theorem \ref{theorem:main} in which we may control the maximum codimension of a proximinal subspace and the maximum codimension of a strongly proximinal subspace.

\begin{theorem}\label{theorem:strong-n-k}
Let $1< n<\infty$ and $1\leq k < n$. Then there is an infinite-dimensional Banach space $\widetilde X_{n,k}$ such that
\begin{itemize}
  \item[(a)] $\widetilde X_{n,k}$ contains $n$-codimensional proximinal subspaces
  \item[(b)] but it does not contain proximinal subspaces of finite-codimension greater than $n$;
  \item[(c)] $\widetilde X_{n,k}$ contains $k$-codimensional strongly proximinal subspaces,
  \item[(d)] but it does not contain $h$-codimensional strongly proximinal subspaces for $h\geq k+1$.
\end{itemize}
Moreover, for every $\varepsilon>0$ such spaces $\widetilde X_{n,k}$ can be constructed to be isomorphic to $c_0$ and in such a way that $\widetilde X_{n,k}$ and $\widetilde X_{n,k}^*$ are strictly convex and smooth, and $\widetilde X_{n,k}^{**}$ is strictly convex.
\end{theorem}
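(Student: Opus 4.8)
The plan is to follow the $\ell_p$-sum scheme behind Theorem~\ref{theorem:main}, but to realize the codimension $n$ as the split $(n-k)+k$: one ``bad'' factor of codimension $n-k$, whose dual norm is nowhere strongly subdifferentiable, together with a tame finite-dimensional factor carrying the remaining $k$ dimensions. Concretely, fix $1<p<\infty$ and, using that $1\le n-k\le n-1$, let $X_{n-k}$ be a space as provided by Theorem~\ref{theorem:main} for the parameter $n-k$; thus $X_{n-k}$ is isomorphic to $c_0$, strictly convex and smooth, with strictly convex smooth dual, strictly convex bidual, and rough dual norm, it carries an $(n-k)$-codimensional proximinal subspace but no proximinal subspace of higher codimension, and $\NA(X_{n-k})$ contains an $(n-k)$-dimensional subspace but no larger one. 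Put $\widetilde X_{n,k}:=X_{n-k}\oplus_p\ell_2^{(k)}$. Because $\ell_p$-sums with $1<p<\infty$ preserve smoothness and strict convexity, because $\ell_2^{(k)}$ is finite-dimensional, smooth and strictly convex with smooth strictly convex dual, and because $\widetilde X_{n,k}^*=X_{n-k}^*\oplus_q\ell_2^{(k)}$ and $\widetilde X_{n,k}^{**}=X_{n-k}^{**}\oplus_p\ell_2^{(k)}$, the ``moreover'' part follows at once, and $\widetilde X_{n,k}\cong c_0\oplus\ell_2^{(k)}\cong c_0$.

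For (a) I would take an $(n-k)$-codimensional proximinal subspace $Y\le X_{n-k}$ and note that, by Lemma~\ref{lemma-proximinality}.a, $Y\times\{0\}$ is proximinal in $\widetilde X_{n,k}$ of codimension $(n-k)+k=n$. For (b): since $\NA(\ell_2^{(k)})=(\ell_2^{(k)})^*$ has dimension $k$ and $\NA(X_{n-k})$ has no $(n-k+1)$-dimensional subspace, Corollary~\ref{cor-contains_less_d1+d2} gives that $\NA(\widetilde X_{n,k})$ has no subspace of dimension greater than $n$; hence, if $Y$ is a proximinal subspace of finite codimension $m$, Lemma~\ref{lemma-proximinality}.c places the $m$-dimensional space $Y^\perp$ inside $\NA(\widetilde X_{n,k})$, so $m\le n$. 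For (c): applying Lemma~\ref{lemma-proximinality}.b to the isometrically identical space $\ell_2^{(k)}\oplus_p X_{n-k}$ shows that $X_{n-k}\times\{0\}$ is strongly proximinal in $\widetilde X_{n,k}$, and it has codimension $k$.

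The heart of the matter is (d). First I would record that $\|\cdot\|_{X_{n-k}^*}$ is strongly subdifferentiable at no nonzero point: $X_{n-k}$ is strictly convex, so by Lemma~\ref{lemma:strictlyconvex-ssd-frechet} any such point would be a point of Fr\'echet differentiability of $\|\cdot\|_{X_{n-k}^*}$, which is impossible because $X_{n-k}^*$ is rough. Now let $Y$ be a strongly proximinal subspace of $\widetilde X_{n,k}$ of finite codimension $h$. By Lemma~\ref{lemma-proximinality}.d, the norm of $\widetilde X_{n,k}^*=X_{n-k}^*\oplus_q\ell_2^{(k)}$ is strongly subdifferentiable at every norm-one element of the $h$-dimensional space $Y^\perp$. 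Fix such an element $(f_1,f_2)$; since $\widetilde X_{n,k}$ is strictly convex, Lemma~\ref{lemma:strictlyconvex-ssd-frechet} upgrades this to Fr\'echet differentiability of $\|\cdot\|_{\widetilde X_{n,k}^*}$ at $(f_1,f_2)$. If $f_1\ne 0$, then Remark~\ref{remark-Frechet-baja}, applied with $X_{n-k}^*$ in the role of the second summand, makes $\|\cdot\|_{X_{n-k}^*}$ Fr\'echet differentiable --- in particular strongly subdifferentiable --- at $f_1\ne 0$, a contradiction. Hence $f_1=0$ for every norm-one $(f_1,f_2)\in Y^\perp$, so $Y^\perp\subseteq\{0\}\times\ell_2^{(k)}$ and $h=\dim Y^\perp\le k$; together with (c), this shows $k$ is exactly the maximal codimension of a strongly proximinal subspace.

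I expect the delicate point to be entirely inside (d): there is no ready-made ``$\ell_p$-sum descent'' statement for strong subdifferentiability, so the route must detour through Fr\'echet differentiability --- legitimate only because both $X_{n-k}$ and $\widetilde X_{n,k}$ are strictly convex (the latter automatically, as $\ell_p$-sums with $1<p<\infty$ preserve strict convexity) and because Remark~\ref{remark-Frechet-baja} does pass Fr\'echet differentiability down to a factor. Beyond that, one must only be careful to invoke Remark~\ref{remark-Frechet-baja} and Lemma~\ref{lemma-proximinality}.b with the factor of interest in the correct slot --- harmless, since both are symmetric up to the coordinate-exchanging isometry of an $\ell_p$-sum --- and to use the elementary facts that $\{0\}$ is proximinal in every space and that a finite-dimensional space attains the norm of each of its functionals. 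Everything else is a direct application of Lemma~\ref{lemma-proximinality}, Lemma~\ref{lemma:strictlyconvex-ssd-frechet}, Corollary~\ref{cor-contains_less_d1+d2} and Remark~\ref{remark-Frechet-baja}, together with the stability of smoothness and strict convexity under $\ell_p$-sums.
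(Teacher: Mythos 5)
Your proposal is correct and follows essentially the same route as the paper: the space is $X_{n-k}\oplus_p(\text{a smooth, strictly convex }k\text{-dimensional space})$, with (a)--(c) read off from Lemma~\ref{lemma-proximinality} and Corollary~\ref{cor-contains_less_d1+d2}, and (d) obtained by upgrading strong subdifferentiability on $Y^\perp$ to Fr\'echet differentiability and then pushing it down to the rough norm of $X_{n-k}^*$ via Remark~\ref{remark-Frechet-baja}. The only (immaterial) deviations are that the paper takes $\ell_p^{(k)}$ rather than $\ell_2^{(k)}$ as the finite-dimensional summand, and that in (d) it deduces Fr\'echet differentiability from the smoothness of $\widetilde X_{n,k}^*$ directly, whereas you route it through the strict convexity of $\widetilde X_{n,k}$ and Lemma~\ref{lemma:strictlyconvex-ssd-frechet}; both upgrades are valid here.
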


\begin{proof}
Given $n$ and $k$ as in the hypotheses, $1<p<\infty$, and $0<\varepsilon<2$, consider $$\widetilde X_{n,k}= \ell_p^{(k)}\oplus_p X_{n-k},$$ where $X_{n-k}$ is a space given by Theorem~\ref{theorem:main} which has proximinal subspaces of codimension $n-k$ and $\NA(X_{n-k})$ contains no linear subspaces of dimension higher than $n-k$, which is isomorphic to $c_0$, such that $X_{n-k}^*$ and $X_{n-k}^{**}$ are strictly convex (so $X_{n-k}^*$ is smooth), and such that the norm of $X_{n-k}^*$ is $(2-\varepsilon)$-rough. We first observe that, clearly, $\widetilde X_{n,k}$ is isomorphic to $c_0$, $\widetilde X_{n,k}$ and $\widetilde X_{n,k}^*$ are strictly convex and smooth and that $\widetilde X_{n,k}^{**}$ is strictly convex. Considering a proximinal subspace $Y$ of $X_{n-k}$ of codimension $n-k$, it follows from Lemma \ref{lemma-proximinality}.a that $\{0\}\times Y$ is proximinal in $\widetilde X_{n,k}$ and its codimension is $n=k+(n-k)$. As $\NA(\ell_p^{(k)})=\bigl[\ell_p^{(k)}\bigr]^*$ does not contain linear subspaces of dimension greater than $k$ and $\NA(X_{n-k})$ does not contain linear subspaces of dimension greater than $n-k$, it follows from Corollary \ref{cor-contains_less_d1+d2} that $\NA(\widetilde X_{n,k})$ does not contain linear subspaces of codimension greater than $n$, so $\widetilde X_{n,k}$ does not contain proximinal subspaces of finite-codimension greater than $n$ by Lemma \ref{lemma-proximinality}.c. This gives (a) and (b).

(c). It follows from Lemma \ref{lemma-proximinality}.b that $\{0\}\times X_{n-k}$ is strongly proximinal in $\widetilde X_{n,k}$ and its codimension is $k$.

(d). Consider $(x,y)$ to be a point of strong subdifferentiability of the norm. As the space $\widetilde X_{n,k}^*=\ell_q^{(k)}\oplus_q X_{n-k}^*$ is smooth, $(x,y)$ is actually a point of Fr\'{e}chet differentiability of the norm. It follows from Remark~\ref{remark-Frechet-baja} that if $y\neq 0$, then the norm of $X_{n-k}^*$ if Fr\'{e}chet differentiable at $y$ but, being this norm rough, this is impossible. It then follows that $y=0$. Therefore, if the norm of $\widetilde X_{n,k}^*$ is strongly subdifferentiable at a subspace $W$, then $W\subset \ell_q^{(k)}\times \{0\}$, so $\dim W\leq k$. Finally, Lemma \ref{lemma-proximinality}.d provides that $\widetilde X_{n,k}$ does not contain strongly proximinal subspaces of codimension greater than $k$, as desired.
\end{proof}

We do not know whether the case $k=n$ is possible in the above proposition.

\begin{problem}
Let $1<n<\infty$. Does there exist an infinite-dimensional Banach space having strongly proximinal subspaces of codimension $n$ but no proximinal subspaces of codimension $n+1$?
\end{problem}

A sight to the proof of Theorem \ref{theorem:strong-n-k} shows that a positive answer to Problem \ref{problem:k=1=n} would give a positive answer to the above one.

\section{The infinite-dimensional versions}\label{section_3}
Our final aim in this paper is to present two infinite-dimensional versions of the previous results.

Here is the first version.

\begin{example}\label{example:infinite1}
There exists a non-separable Banach space $X_\infty$ satisfying:
\begin{itemize}
  \item[(a)] $\NA(X_\infty)$ contains infinite-dimensional separable subspaces,
  \item[(b)] but $\NA(X_\infty)$ contains no non-separable subspaces,
  \item[(c)] and every closed subspace contained in $\NA(X_\infty)$ is finite-dimensional.
  \item[(d)] $X_\infty$ contains $n$-codimensional proximinal subspaces for every $n\in \N$,
  \item[(e)] but every proximinal factor reflexive subspace is of finite-codimension,
  \item[(f)] and $X_\infty$ contains no strongly proximinal factor reflexive subspaces (other than $X_\infty$ itself).
\end{itemize}
\end{example}

\begin{proof}
Let $X$ be a non-separable Banach space which is isomorphic to a subspace of $\ell_\infty$ and contains $c_0$, and for every $1<n<\infty$, we consider $Z_n$ to be a renorming of $X$ given in \cite[Theorem 4.1]{KLMW-Jussieu} such that $\NA(Z_n)$ does not contain $2$-dimensional subspaces. We consider the non-separable Banach space $X_\infty=\left[\bigoplus_{n\in \N} Z_n\right]_{c_0}$. Recall that $X_\infty^*=\left[\bigoplus_{n\in \N} Z_n^*\right]_{\ell_1}$ and for each $n\in \N$ we  write $p_n:X_\infty^*\longrightarrow Z_n^*$
to denote the natural projection. By applying Lemma \ref{lemma_NAsum}, it follows that
$$
\NA(X_\infty)\subset \prod\nolimits_{n\in \N} \NA(Z_n)
$$
or, equivalently, that $p_n(\NA(X_\infty))\subset NA(Z_n)$ for every $n\in \N$.
Now, if $W$ is a linear subspace of $X_\infty^*$ which is contained in $\NA(X_\infty)$, we have that $W\subset \prod_{n\in\N} p_n(W)$, and $p_n(W)$ is a linear subspace of $Z_n^*$ contained in $\NA(Z_n)$, so $\dim(p_n(W))\leq 1$. Moreover, if $(f_n)\in \NA(X_\infty)\subset X_\infty^*$, it is immediate to see that the set $\{n\in\N\colon f_n\neq 0\}$ has to be finite. It follows that $\prod_{n\in\N} p_n(W)$, and so $W$, has a countable Hamel basis. Therefore, $W$ is separable, giving (b), and on the other hand, every complete subspace contained in $\NA(X_\infty)$ is of finite-dimension, giving (c). If now $Y$ is a factor reflexive proximinal subspace, then $Y^\perp\subset \NA(X_\infty)$ by Lemma \ref{lemma-proximinality}.c, so $Y^\perp$ and $X_\infty/Y$ are finite-dimensional, giving (e).

Item (f) follows easily from \cite[Theorem 2.2]{InduPIndian2001} and the fact that $Z_n$ has no (non-trivial) strongly proximinal factor reflexive subspaces, but we would like to give a direct proof. Indeed, observe that if $Y$ is a non-trivial factor reflexive strongly proximinal subspace of $X_\infty$, then $Y$ is of finite-codimension by (e) and so Lemma \ref{lemma-proximinality}.d provides the existence of points of strong subdifferentiability of the norm in $X_\infty^*$. But none of the spaces $Z_n^*$ contains points of strong subdifferentiability of the norm by Example \ref{example:others-no-strongprox-hyperplanes} and Lemma \ref{lemma-proximinality}.d, and it follows that $X_\infty^*=\left[\bigoplus_{n\in \N} Z_n^*\right]_{\ell_1}$ contains no point of strong subdifferentiability of the norm by Remark \ref{remark_2}. This contradiction proves (f).

Next, observe that Lemma \ref{lemma-proximinality}.a provides proximinal subspaces of $X_\infty$ of any arbitrary finite codimension, giving (d). Indeed, fix $n\in \N$ and consider $H_k$ a proximinal hyperplane of $X_k$ for $1\leq k\leq n$; then, the $n$-codimensional subspace $$H_1\oplus_\infty \cdots \oplus_\infty H_n \times \left[\bigoplus\nolimits_{k\geq n+1} X_{k}\right]_{c_0}$$
is proximinal in $X_\infty$.

Finally, to get (a), consider for each $n\in \N$ a norm-one $f_n\in \NA(Z_n)$ and observe that the subspace
$$
W:=\bigl\{(t_n f_n)\colon (t_n)\in c_{00} \bigr\}\subset \left[\bigoplus\nolimits_{n\in\N} Z_n^*\right]_{\ell_1}\equiv X_\infty^*
$$
is clearly contained in $\NA(X_\infty)$.
\end{proof}

Let us comment that both $c_0$ and the space of compact linear operators on $\ell_2$ shares the properties (a), (b), (c), (d), and (e) of the space $X_\infty$ constructed above (for $c_0$ is immediate, for the space of compact linear operator on $\ell_2$, see the proof of \cite[Theorem 3.5]{Narayana-Rao}). But they are separable Banach spaces.

By an analogous argument to the one given in Theorem \ref{theorem:strong-n-k} (using Remark \ref{remark_2} instead of Remark \ref{remark-Frechet-baja}), we can easily state the following result.

\begin{remark}
Let $X_\infty$ the space constructed in Example \ref{example:infinite1}. For every $k\in \N$, the non-separable space $\ell_\infty^{(k)}\oplus_\infty X_\infty$ satisfies the conditions (a), (b), (c), (d), and (e) of the cited examples, and also contains strongly proximinal subspaces of codimension $k$, but not higher.
\end{remark}

The second infinite-dimensional version of our results is the following example.

\begin{example}
There exists a non-separable Banach space $\overline{X}_\infty$ satisfying:
\begin{itemize}
  \item[(a)] $\NA(\overline{X}_\infty)$ contains infinite-dimensional closed separable subspaces,
  \item[(b)] but $\NA(\overline{X}_\infty)$ contains no non-separable subspaces.
  \item[(c)] $\overline{X}_\infty$ contains a factor reflexive proximinal subspace such that $\overline{X}_\infty/Y$ is infinite-dimensional and separable,
  \item[(d)] and if $Y$ is a factor reflexive proximinal subspace of $\overline{X}_\infty$, then $\overline{X}_\infty/Y$ is separable.
  \item[(e)] $\overline{X}_\infty$ does not contain strongly proximinal finite-codimensional subspaces (other than $\overline{X}_\infty$ itself).
\end{itemize}
\end{example}

\begin{proof}
Let $X$ be a non-separable Banach space which is isomorphic to a subspace of $\ell_\infty$ and contains $c_0$, and for every $n\in \N$, we consider $Z_n$ to be a renorming of $X$ given in \cite[Proposition 4.4]{KLMW-Jussieu} such that $\NA(Z_n)$ does not contain $2$-dimensional subspaces, that $Z_n$ is strictly convex, and the norm of $Z_n^*$ is rough. Consider the non-separable Banach space $\overline{X}_\infty=\left[\bigoplus_{n\in \N} Z_n\right]_{\ell_2}$. Observe that $\overline{X}_\infty^*=\left[\bigoplus_{n\in \N} Z_n^*\right]_{\ell_2}$, and for each $n\in \N$, write $p_n:X_\infty^*\longrightarrow Z_n^*$
to denote the natural projection. By applying Lemma \ref{lemma_NAsum}, it follows that
$$
\NA(\overline{X}_\infty)\subset \prod\nolimits_{n\in \N} \NA(Z_n)
$$
or, equivalently, that $p_n(\NA(\overline{X}_\infty))\subset NA(Z_n)$ for every $n\in \N$.
Now, if $W$ is a linear subspace contained in $\NA(\overline{X}_\infty)$, then $p_n(W)$ is one-dimensional for every $n\in \N$, as $p_n(W)\subset \NA(Z_n)$. As
$$
W\subset \left[\bigoplus\nolimits_{n\in \N} p_n(W)\right]_{\ell_2},
$$
it follows that $W$ is separable, giving (b). Moreover, if $Y$ is a factor reflexive proximinal subspace of $\overline{X}_\infty$, as $Y^\perp\subset \NA(\overline{X}_\infty)$ by Lemma \ref{lemma-proximinality}.c, we obtain that $Y^\perp$ and $X/Y$ are separable, giving (d).

Next, for each $n\in \N$, we consider a proximinal hyperplane $H_n$ of $Z_n$ and we define $Y=\left[\bigoplus_{n\in \N} H_n\right]_{\ell_2}$. Then, $Y$ is a proximinal subspace of $\overline{X}_\infty$ by \cite[Corollary 4.2]{BandLiLinNarayana} and $\overline{X}_\infty/Y$ is isometric to $\ell_2$. This gives (c). As $Y^\perp \subset \NA(\overline{X}_\infty)$ by Lemma \ref{lemma-proximinality}.c, we also obtain (a).

To get (e), by Lemma \ref{lemma-proximinality}.d, we only have to check that $\overline{X}_\infty^*$ contains no point of strong subdifferentiability of the norm. Indeed, being $\overline{X}_\infty$ an $\ell_2$-sum of strictly convex spaces, it is strictly convex, so if $\overline{X}_\infty^*$ contains a point of strong subdifferentiability of the norm, Lemma \ref{lemma:strictlyconvex-ssd-frechet} gives that it contains a point of Fr\'{e}chet differentiability of the norm. By Remark \ref{remark-Frechet-baja}, this implies that one of the spaces $Z_n^*$ contains a point of Fr\'{e}chet differentiability of the norm, which is impossible since the norm of each $Z_n^*$ is rough.
\end{proof}

We do not know whether the space $\overline{X}_\infty$ contains factor reflexive strongly proximinal subspaces of infinite codimension.

Let us comment that it is immediate $\ell_2$ (or any separable reflexive space) shares the properties (a), (b), (c), and (d) of the space $\overline{X}_\infty$ constructed above. But it is a separable Banach space.

\vspace*{0.5cm}

\noindent \textbf{Acknowledgment:\ } The author thanks Richard Aron for asking the questions which lead to the study of the topics of this manuscript. He also thanks V.~Indumathi for providing the argument to get Lemma~\ref{lemma-proximinality}.d and Gilles Godefroy for kindly answering several inquires related to the topics of this manuscript and provide valuable references.

\end{document}